\numberwithin{equation}{section}
\subjclass[2010]{05B15, 05A16.}
\keywords{Alon-Tarsi conjecture, Latin squares, equidistribution, integration over unitary groups}
\title{Square-root cancellation for the signs of Latin squares}
\author{Levent Alpoge}\email{levent.alpoge@gmail.com}
\address{Churchill College, University of Cambridge, Cambridge CB3 0DS.}
\begin{document}

\begin{abstract}
Let $L(n)$ be the number of Latin squares of order $n$, and let $L^{\textrm{even}}(n)$ and $L^{\textrm{odd}}(n)$ be the number of even and odd such squares, so that $L(n) = L^{\textrm{even}}(n) + L^{\textrm{odd}}(n)$. The Alon-Tarsi conjecture states that $L^{\textrm{even}}(n)\neq L^{\textrm{odd}}(n)$ when $n$ is even (when $n$ is odd the two are equal for very simple reasons). In this short note we prove that $$|L^{\textrm{even}}(n) - L^{\textrm{odd}}(n)|\leq L(n)^{\frac{1}{2} + o(1)},$$ thus establishing the conjecture that the number of even and odd Latin squares, while conjecturally not equal in even dimensions, are equal to leading order asymptotically. Two proofs are given: both proceed by applying a differential operator to an exponential integral over $\mathrm{SU}(n)$. The method is inspired by a recent result of Kumar-Landsberg.
\end{abstract}

\maketitle

\newtheoremstyle{dotless}{}{}{\itshape}{}{\bfseries}{}{ }{}

\newtheorem{thm}{Theorem}
\newtheorem{lem}[thm]{Lemma}
\newtheorem{remark}[thm]{Remark}
\newtheorem{cor}[thm]{Corollary}
\newtheorem{defn}[thm]{Definition}
\newtheorem{prop}[thm]{Proposition}
\newtheorem{conj}[thm]{Conjecture}
\newtheorem{claim}[thm]{Claim}
\newtheorem{exer}[thm]{Exercise}
\newtheorem{fact}[thm]{Fact}

\theoremstyle{dotless}

\newtheorem{thmnodot}[thm]{Theorem}
\newtheorem{lemnodot}[thm]{Lemma}
\newtheorem{cornodot}[thm]{Corollary}

\newcommand{\image}{\mathop{\text{image}}}
\newcommand{\End}{\mathop{\text{End}}}
\newcommand{\Hom}{\mathop{\text{Hom}}}
\newcommand{\Sum}{\displaystyle\sum\limits}
\newcommand{\Prod}{\displaystyle\prod\limits}
\newcommand{\Tr}{\mathop{\mathrm{Tr}}}
\renewcommand{\Re}{\operatorname{\mathfrak{Re}}}
\renewcommand{\Im}{\operatorname{\mathfrak{Im}}}
\newcommand{\im}{\mathrm{im}\,}
\newcommand{\inner}[1]{\langle #1 \rangle}
\newcommand{\pair}[2]{\langle #1, #2\rangle}
\newcommand{\ppair}[2]{\langle\langle #1, #2\rangle\rangle}
\newcommand{\Pair}[2]{\left[#1, #2\right]}
\newcommand{\Char}{\mathop{\mathrm{char}}}
\newcommand{\rank}{\mathrm{rank}}
\newcommand{\sgn}[1]{\mathop{\mathrm{sgn}}(#1)}
\newcommand{\leg}[2]{\left(\frac{#1}{#2}\right)}
\newcommand{\Sym}{\mathrm{Sym}}
\newcommand{\hmat}[2]{\left(\begin{array}{cc} #1 & #2\\ -\bar{#2} & \bar{#1}\end{array}\right)}
\newcommand{\HMat}[2]{\left(\begin{array}{cc} #1 & #2\\ -\overline{#2} & \overline{#1}\end{array}\right)}
\newcommand{\Sin}[1]{\sin{\left(#1\right)}}
\newcommand{\Cos}[1]{\cos{\left(#1\right)}}
\newcommand{\comm}[2]{\left[#1, #2\right]}
\newcommand{\Isom}{\mathop{\mathrm{Isom}}}
\newcommand{\Map}{\mathop{\mathrm{Map}}}
\newcommand{\Bij}{\mathop{\mathrm{Bij}}}
\newcommand{\Z}{\mathbb{Z}}
\newcommand{\R}{\mathbb{R}}
\newcommand{\Q}{\mathbb{Q}}
\newcommand{\C}{\mathbb{C}}
\newcommand{\Nm}{\mathrm{Nm}}
\newcommand{\RI}[1]{\mathcal{O}_{#1}}
\newcommand{\F}{\mathbb{F}}
\renewcommand{\Pr}{\displaystyle\mathop{\mathrm{Pr}}\limits}
\newcommand{\E}{\mathbb{E}}
\newcommand{\coker}{\mathop{\mathrm{coker}}}
\newcommand{\id}{\mathop{\mathrm{id}}}
\newcommand{\Oplus}{\displaystyle\bigoplus\limits}
\renewcommand{\Cap}{\displaystyle\bigcap\limits}
\renewcommand{\Cup}{\displaystyle\bigcup\limits}
\newcommand{\Bil}{\mathop{\mathrm{Bil}}}
\newcommand{\N}{\mathbb{N}}
\newcommand{\Aut}{\mathop{\mathrm{Aut}}}
\newcommand{\ord}{\mathop{\mathrm{ord}}}
\newcommand{\ch}{\mathop{\mathrm{char}}}
\newcommand{\minpoly}{\mathop{\mathrm{minpoly}}}
\newcommand{\Spec}{\mathop{\mathrm{Spec}}}
\newcommand{\Gal}{\mathop{\mathrm{Gal}}}
\newcommand{\Ad}{\mathop{\mathrm{Ad}}}
\newcommand{\Stab}{\mathop{\mathrm{Stab}}}
\newcommand{\Norm}{\mathop{\mathrm{Norm}}}
\newcommand{\Orb}{\mathop{\mathrm{Orb}}}
\newcommand{\pfrak}{\mathfrak{p}}
\newcommand{\qfrak}{\mathfrak{q}}
\newcommand{\mfrak}{\mathfrak{m}}
\newcommand{\Frac}{\mathop{\mathrm{Frac}}}
\newcommand{\Loc}{\mathop{\mathrm{Loc}}}
\newcommand{\Sat}{\mathop{\mathrm{Sat}}}
\newcommand{\inj}{\hookrightarrow}
\newcommand{\surj}{\twoheadrightarrow}
\newcommand{\bij}{\leftrightarrow}
\newcommand{\Ind}{\mathrm{Ind}}
\newcommand{\Supp}{\mathop{\mathrm{Supp}}}
\newcommand{\Ass}{\mathop{\mathrm{Ass}}}
\newcommand{\Ann}{\mathop{\mathrm{Ann}}}
\newcommand{\Krulldim}{\dim_{\mathrm{Kr}}}
\newcommand{\Avg}{\mathop{\mathrm{Avg}}}
\newcommand{\innerhom}{\underline{\Hom}}
\newcommand{\triv}{\mathop{\mathrm{triv}}}
\newcommand{\Res}{\mathrm{Res}}
\newcommand{\eval}{\mathop{\mathrm{eval}}}
\newcommand{\MC}{\mathop{\mathrm{MC}}}
\newcommand{\Fun}{\mathop{\mathrm{Fun}}}
\newcommand{\InvFun}{\mathop{\mathrm{InvFun}}}
\renewcommand{\ch}{\mathop{\mathrm{ch}}}
\newcommand{\irrep}{\mathop{\mathrm{Irr}}}
\newcommand{\len}{\mathop{\mathrm{len}}}
\newcommand{\SL}{\mathrm{SL}}
\newcommand{\GL}{\mathrm{GL}}
\newcommand{\PSL}{\mathrm{SL}}
\newcommand{\actson}{\curvearrowright}
\renewcommand{\H}{\mathbb{H}}
\newcommand{\mat}[4]{\left(\begin{array}{cc} #1 & #2\\ #3 & #4\end{array}\right)}
\newcommand{\interior}{\mathop{\mathrm{int}}}
\newcommand{\floor}[1]{\left\lfloor #1\right\rfloor}
\newcommand{\iso}{\cong}
\newcommand{\eps}{\epsilon}
\newcommand{\disc}{\mathrm{disc}}
\newcommand{\Frob}{\mathrm{Frob}}
\newcommand{\charpoly}{\mathrm{charpoly}}
\newcommand{\afrak}{\mathfrak{a}}
\newcommand{\cfrak}{\mathfrak{c}}
\newcommand{\codim}{\mathrm{codim}}
\newcommand{\ffrak}{\mathfrak{f}}
\newcommand{\Pfrak}{\mathfrak{P}}
\newcommand{\homcont}{\hom_{\mathrm{cont}}}
\newcommand{\vol}{\mathrm{vol}}
\newcommand{\ofrak}{\mathfrak{o}}
\newcommand{\A}{\mathbb{A}}
\newcommand{\I}{\mathbb{I}}
\newcommand{\invlim}{\varprojlim}
\newcommand{\dirlim}{\varinjlim}
\renewcommand{\ch}{\mathrm{char}}
\newcommand{\artin}[2]{\left(\frac{#1}{#2}\right)}
\newcommand{\Qfrak}{\mathfrak{Q}}
\newcommand{\ur}[1]{#1^{\mathrm{ur}}}
\newcommand{\absnm}{\mathcal{N}}
\newcommand{\ab}[1]{#1^{\mathrm{ab}}}
\newcommand{\G}{\mathbb{G}}
\newcommand{\dfrak}{\mathfrak{d}}
\newcommand{\Bfrak}{\mathfrak{B}}
\renewcommand{\sgn}{\mathrm{sgn}}
\newcommand{\disjcup}{\bigsqcup}
\newcommand{\zfrak}{\mathfrak{z}}
\renewcommand{\Tr}{\mathrm{Tr}}
\newcommand{\reg}{\mathrm{reg}}
\newcommand{\subgrp}{\leq}
\newcommand{\normal}{\vartriangleleft}
\newcommand{\Dfrak}{\mathfrak{D}}
\newcommand{\nvert}{\nmid}
\newcommand{\K}{\mathbb{K}}
\newcommand{\pt}{\mathrm{pt}}
\newcommand{\RP}{\mathbb{RP}}
\newcommand{\CP}{\mathbb{CP}}
\newcommand{\rk}{\mathrm{rk}}
\newcommand{\redH}{\tilde{H}}
\renewcommand{\H}{\tilde{H}}
\newcommand{\Cyl}{\mathrm{Cyl}}
\newcommand{\T}{\mathbb{T}}
\newcommand{\Ab}{\mathrm{Ab}}
\newcommand{\Vect}{\mathrm{Vect}}
\newcommand{\Top}{\mathrm{Top}}
\newcommand{\Nat}{\mathrm{Nat}}
\newcommand{\inc}{\mathrm{inc}}
\newcommand{\Tor}{\mathrm{Tor}}
\newcommand{\Ext}{\mathrm{Ext}}
\newcommand{\fungrpd}{\pi_{\leq 1}}
\newcommand{\slot}{\mbox{---}}
\newcommand{\funct}{\mathcal}
\newcommand{\Funct}{\mathcal{F}}
\newcommand{\Gunct}{\mathcal{G}}
\newcommand{\FunCat}{\mathrm{Funct}}
\newcommand{\Rep}{\mathrm{Rep}}
\newcommand{\Specm}{\mathrm{Specm}}
\newcommand{\ev}{\mathrm{ev}}
\newcommand{\frpt}[1]{\{#1\}}
\newcommand{\h}{\mathscr{H}}
\newcommand{\poly}{\mathrm{poly}}
\newcommand{\Partial}[1]{\frac{\partial}{\partial #1}}
\newcommand{\Cont}{\mathrm{Cont}}
\renewcommand{\o}{\ofrak}
\newcommand{\bfrak}{\mathfrak{b}}
\newcommand{\Cl}{\mathrm{Cl}}
\newcommand{\ceil}[1]{\lceil #1\rceil}
\newcommand{\hfrak}{\mathfrak{h}}
\newcommand{\Sel}{\mathrm{Sel}}
\newcommand{\Qbar}{\overline{\mathbb{Q}}}
\renewcommand{\I}{\mathrm{I}}
\newcommand{\II}{\mathrm{II}}
\newcommand{\III}{\mathrm{III}}
\newcommand{\IV}{\mathrm{IV}}
\newcommand{\V}{\mathrm{V}}
\newcommand{\FuniversalT}{\mathcal{F}_{\mathrm{universal}}^{\leq T}}
\newcommand{\FAT}{\mathcal{F}_{A=0}^{\leq T}}
\newcommand{\FBT}{\mathcal{F}_{B=0}^{\leq T}}
\newcommand{\FcongT}{\mathcal{F}_{\mathrm{congruent}}^{\leq T}}
\newcommand{\rad}{\mathrm{rad}}
\newcommand{\const}{\mathrm{const}}
\renewcommand{\sp}{\mathrm{span}}
\renewcommand{\d}{\partial}
\newcommand{\num}{\mathrm{num}}
\newcommand{\den}{\mathrm{den}}
\newcommand{\ind}{\mathrm{ind}}
\newcommand{\sign}{\mathrm{sign}}
\newcommand{\SU}{\mathrm{SU}}
\newcommand{\U}{\mathrm{U}}
\newcommand{\tr}{\mathrm{tr}}
\newcommand{\diag}{\mathrm{diag}}

\let\uglyphi\phi
\let\phi\varphi

%\tableofcontents

\section{Introduction}
By a \emph{Latin square} we will mean an $n\times n$ matrix containing the numbers $1,\ldots,n$ such that each row and column of the matrix is a permutation of $[n] := \{1,\ldots,n\}$. Note that this is equivalent to a decomposition $$P_{\sigma_1} + \cdots + P_{\sigma_n} = \left(\begin{array}{cccc} 1 & 1 & \cdots & 1\\ \vdots & \vdots & & \vdots\\ 1 & 1 & \cdots & 1\end{array}\right)$$ of the all-ones matrix into a sum of permutation matrices $P_\sigma$ of permutations $\sigma$ (specifically, $\sigma_i$ indicates the positions of $i$), and we will identify a Latin square with its tuple of permutations $\vec{\sigma} := (\sigma_1, \ldots, \sigma_n)$. The \emph{sign} of a Latin square $\vec{\sigma}$ is defined to be $$\sign(\vec{\sigma}) := (-1)^{\frac{n(n-1)}{2}}\prod_{i=1}^n \sign(\sigma_i),$$ where $\sign: S_n\to \{\pm 1\}$ is the usual sign homomorphism. Let $L(n)$ denote the number of Latin squares, and $L^{\textrm{even}}(n)$ and $L^{\textrm{odd}}(n)$ denote the number of even and odd Latin squares, respectively.\footnote{Hence e.g.\ $L(6) = 812851200$, $L^{\textrm{even}}(4) = 576$, and $L^{\textrm{odd}}(5) = 80640$.} Note that $$L^{\textrm{even}}(n) - L^{\textrm{odd}}(n) = (-1)^{\frac{n(n-1)}{2}}\sum_{\vec{\sigma}\text{ Latin}} \sign(\vec{\sigma}),$$ and so $L^{\textrm{even}}(n) - L^{\textrm{odd}}(n)$ is the coefficient of $\prod_{i,j=1}^n X_{ij}$ in the polynomial $(-1)^{\frac{n(n-1)}{2}}\det(X)^n$, where $X := (X_{ij})_{i,j=1}^n$.

Since multiplying each $\sigma_i$ by e.g.\ the transposition $(1\ 2)$ changes $\sign(\vec{\sigma})$ by a factor of $(-1)^n$ but preserves Latinness, if $n$ is odd the number of even and odd Latin squares are equal. The longstanding conjecture of Alon and Tarsi states that this is the only case of equality.

\begin{conj}[Alon-Tarsi \cite{alontarsi}.]
Let $n$ be even. Then $L^{\textrm{even}}(n)\neq L^{\textrm{odd}}(n)$.
\end{conj}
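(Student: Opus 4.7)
The plan is to realize $L^{\text{even}}(n) - L^{\text{odd}}(n)$ as an integral over $\SU(n)$ and then argue that this integral is strictly nonzero when $n$ is even. The starting point is the identity $L^{\text{even}}(n) - L^{\text{odd}}(n) = (-1)^{n(n-1)/2}\left[\prod_{i,j}X_{ij}\right]\det(X)^n$ noted in the introduction, which reduces the problem to showing a single monomial coefficient of $\det(X)^n$ is nonzero. To convert this coefficient extraction into a Haar integral, I would apply the differential operator $\prod_{i,j}\partial_{X_{ij}}$ to the generating function $\exp(\Tr(XU^*))$ and integrate $U$ against Haar measure on $\SU(n)$, with a $\det(U)^k$ twist chosen so that orthogonality of matrix coefficients picks out precisely the coefficient of $\prod_{i,j}X_{ij}$ in $\det(X)^n$. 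This should yield a formula of the shape
$$L^{\text{even}}(n) - L^{\text{odd}}(n) \;=\; c_n \int_{\SU(n)} D\bigl[\exp(\Tr U)\bigr]\, dU$$
for an explicit differential operator $D$ and constant $c_n$, in the style of the $\SU(n)$ integral representation cited in the abstract.

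Second, I would apply the Weyl integration formula to push the integral down to the maximal torus, producing
$$\int_{\{\theta\,:\,\sum_j \theta_j \equiv 0\}} g(\theta_1,\ldots,\theta_n)\,|\Delta(e^{i\theta})|^2 \,d\theta,$$
where $\Delta$ is the Vandermonde and $g$ is the (conjugation-invariant) differential operator $D$ applied to $\exp(\sum_j e^{i\theta_j})$, hence a polynomial expression in the $e^{\pm i\theta_j}$. The hypothesis that $n$ is even enters through a parity check: the reflection $\theta_j\mapsto -\theta_j$, i.e.\ the conjugation $U\mapsto \overline U$, multiplies the integrand by $(-1)^n$, so for $n$ odd the integral is forced to vanish (recovering the trivial case of the conjecture), while for $n$ even no such symmetry obstruction is present. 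This is the structural feature the argument must exploit.

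Third, and this is the crux, one must show that the torus integral is actually nonzero for even $n$. The approach I propose is a main-term extraction followed by a positivity argument: localize the integrand around a plausible critical configuration such as $\theta_j = 2\pi j/n$ (the $n$-th roots of unity, where $|\Delta|^2$ is maximized among balanced configurations), carry out a stationary-phase expansion, and write the integral as a leading main term plus a remainder controlled by the square-root cancellation bound $|L^{\text{even}}(n) - L^{\text{odd}}(n)| \leq L(n)^{1/2+o(1)}$ established elsewhere in this paper. The hard part is guaranteeing that the main term is itself nonzero; lacking a direct computation, one would want a representation-theoretic interpretation of this main term as a manifestly nonnegative quantity, for instance a multiplicity in the decomposition of $(\C^n)^{\otimes n^2}$ under $\SU(n)\times S_n$-actions matched to the Latin-square symmetry, together with an explicit vector witnessing nonvanishing. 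I expect this last step to be the decisive obstacle: square-root cancellation controls the magnitude of the integral but is entirely insensitive to its sign, and no known technique rules out accidental cancellation of the main term at arbitrary even $n$. It is precisely this gap that keeps the Alon-Tarsi conjecture open, and any successful implementation of the plan above will live or die at this positivity step.
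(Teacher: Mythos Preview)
The statement you are attempting to prove is not proved in the paper. It is stated there as Conjecture~1 --- the Alon--Tarsi conjecture --- and the paper explicitly describes it as open since 1992, known only for $n = p\pm 1$ with $p$ an odd prime. What the paper actually establishes is the upper bound $|L^{\textrm{even}}(n)-L^{\textrm{odd}}(n)|\le L(n)^{1/2+o(1)}$, which is entirely compatible with the difference being zero and hence says nothing about the conjecture itself.

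Your proposal correctly reconstructs the paper's integral representation: the identity $c_\alpha\,\alpha! = \dfrac{k!\cdots(k+n-1)!}{0!\cdots(n-1)!}\displaystyle\int_{\SU(n)} U^\alpha\,dU$ with $\alpha=(1,\dots,1)$ and $k=n$ is exactly Corollary~\ref{better corollary}, and the paper does note that Alon--Tarsi is equivalent to the nonvanishing of $\int_{\SU(n)} U_{11}\cdots U_{nn}\,dU$. But from that point on your plan has a genuine, and as you yourself concede, fatal gap. The stationary-phase/main-term strategy you sketch cannot work as stated: the square-root bound from the paper is an \emph{upper} bound on $|L^{\textrm{even}}(n)-L^{\textrm{odd}}(n)|$, so it controls the size of the \emph{entire} integral, not a remainder term separated from some main term. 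You therefore cannot use it to show that a putative main term dominates. Nor is there any known representation-theoretic interpretation that renders the integral manifestly of one sign; indeed, the integrand $U_{11}\cdots U_{nn}$ is not nonnegative on $\SU(n)$, and the multiplicity interpretation available (via Schur--Weyl) applies to $\int |\tr U|^{2m}\,dU$-type quantities, not to this monomial. Your final paragraph is an accurate self-diagnosis: the positivity step is the whole problem, and nothing in the proposal supplies it.
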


This conjecture implies the Dinitz conjecture (since proven) and the Rota basis conjecture in even dimensions. It has remained open since 1992, though it has been proven for $n$ of the form $p\pm 1$ for an odd prime $p$ \cite{glynn, drisko}. It has been conjectured --- cf.\ e.g.\ \cite{stoneswanless} --- that, though different, the number of even and odd Latin squares should at least agree up to leading order (i.e., the sign of a Latin square should be equidistributed). We prove this in two ways, the more interesting of which proceeds by proving the following identity. To state the identity, we will need the following notation: for a multiindex of nonnegative integers $\alpha\in \N^{[n]\times [n]}$, let $|\alpha| := \sum_{i,j=1}^n \alpha_{ij}$, $X^{\alpha} := \prod_{i,j=1}^n X_{ij}^{\alpha_{ij}}$, and $\alpha! := \prod_{i,j=1}^n \alpha_{ij}!$. The identity then reads as follows.

\begin{thm}\label{identity}
Let $k\geq 0$. For $\alpha$ such that $|\alpha| = kn$, let $c_\alpha$ be the coefficient of $X^\alpha$ in $\det(X)^k$. Then $$\sum_{|\alpha| = kn} c_{\alpha}^2 \alpha! = \frac{k!\cdots (k+n-1)!}{0!\cdots (n-1)!}.$$
\end{thm}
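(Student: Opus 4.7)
The plan is to interpret the left-hand side as the squared Bargmann--Fock norm of $\det(Z)^k$ (viewed as a polynomial in the entries of a matrix $Z\in\C^{n\times n}$), and then to evaluate this norm as an explicit Gaussian matrix integral whose value is classical.

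First, I would note the entrywise Fock identity $\int_\C z^a\bar z^b e^{-|z|^2}\,dz/\pi = a!\,\delta_{ab}$, which, applied $n^2$ times, gives for any polynomial $f(Z)=\sum_\alpha a_\alpha Z^\alpha$ with real coefficients the orthogonality relation
\[
\int_{\C^{n\times n}} |f(Z)|^2\,e^{-\|Z\|^2}\,\frac{dZ}{\pi^{n^2}} \;=\; \sum_\alpha a_\alpha^2\,\alpha!,
\]
where $\|Z\|^2:=\sum_{i,j}|Z_{ij}|^2$. Applied to $f=\det(Z)^k$ (whose coefficients $c_\alpha$ are integers), this identifies the left-hand side of the theorem with the matrix Gaussian integral $\int_{\C^{n\times n}} |\det Z|^{2k}\,e^{-\|Z\|^2}\,dZ/\pi^{n^2}$.

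Second, I would evaluate this integral. The cleanest route is to recognize $M:=Z^*Z$ as a complex Wishart matrix $W_n^{\C}(n,I_n)$, whose density on positive Hermitian matrices is $\Gamma_n^{\C}(n)^{-1}e^{-\Tr M}\,dM$ with $\Gamma_n^{\C}(s):=\prod_{j=0}^{n-1}\Gamma(s-j)$; the identity $\int \det(M)^k e^{-\Tr M}\,dM=\Gamma_n^{\C}(n+k)$ (itself a Wishart normalizing constant) then telescopes the integral to $\Gamma_n^{\C}(n+k)/\Gamma_n^{\C}(n)=\prod_{j=0}^{n-1}(k+j)!/j!$, which is the desired right-hand side. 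More in the spirit of the paper's theme, one could instead insert the singular value decomposition $Z=U\diag(\sqrt{\lambda_1},\ldots,\sqrt{\lambda_n})V^*$ with $U,V\in \U(n)$, integrate out the $\U(n)\times \U(n)$ factor against Haar measure, and use the SVD Jacobian to reduce the problem to the Laguerre-type eigenvalue integral $\int_{[0,\infty)^n}\prod_i \lambda_i^k e^{-\lambda_i}\prod_{i<j}(\lambda_i-\lambda_j)^2\,d\lambda$, which evaluates via Andreief's identity to $n!$ times the factorial Hankel determinant $\det\bigl((k+i+j-2)!\bigr)_{i,j=1}^n=\prod_{j=0}^{n-1}j!(k+j)!$, giving the same final product.

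The only real obstacle is bookkeeping the normalization constants --- the factor $\pi^{-n^2}$, the Haar volume of $\U(n)$, and the Wishart/Selberg constant --- so that everything cancels to yield exactly $\prod_{j=0}^{n-1}(k+j)!/j!$. Once the Bargmann--Fock interpretation in the first step is recognized, the remainder of the proof is a routine evaluation of a textbook matrix integral.
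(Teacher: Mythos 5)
Your argument is correct, and it takes a genuinely different route from the paper's. Both proofs ultimately rest on the same hidden structure --- the quantity $\sum_\alpha c_\alpha^2\alpha!$ is the norm of $\det{}^k$ in the apolarity/Fock inner product --- but you realize this as a Gaussian integral $\int_{\C^{n\times n}}|\det Z|^{2k}e^{-\|Z\|^2}\,dZ/\pi^{n^2}$, whereas the paper realizes it as $\det(\d)^k\det(X)^k$ and extracts its value by applying $\det(\d)^k$ to $\int_{\SU(n)}e^{\Tr(XU)}\,dU$, expanding that integral as a series in $\det(X)$ via Schur--Weyl duality and the hook-length formula for the rectangular partition $(\ell,\ldots,\ell)$. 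Your evaluation of the Ginibre moment $\E[|\det Z|^{2k}]=\prod_{j=0}^{n-1}(k+j)!/j!$ is classical and both of your proposed routes (the Wishart normalizing constant, or SVD plus Andreief and the Hankel determinant $\det((k+i+j-2)!)=\prod_j j!\,(k+j)!$) check out; the small checks $n=1$ and $(n,k)=(2,1)$ confirm the normalizations, and the $\pi^{n(n-1)/2}$ in the multivariate gamma cancels in the ratio as you anticipate. What your approach buys is a self-contained random-matrix computation that avoids representation theory entirely. What the paper's approach buys is the refined statement of Corollary \ref{better corollary}: by applying the single monomial operator $\d^\alpha$ instead of $\det(\d)^k$ to the same $\SU(n)$ exponential integral, one gets the individual identity $c_\alpha\alpha!=\frac{k!\cdots(k+n-1)!}{0!\cdots(n-1)!}\int_{\SU(n)}U^\alpha\,dU$, which is what yields the slightly sharper constant $4/\sqrt{e}$ in the final bound; your Fock-space formulation could recover this too (as $\langle\det{}^k,Z^\alpha\rangle$ followed by an SVD), but you would need to carry out that extra step explicitly.
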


From this we immediately obtain:\footnote{By $f\ll g$ we mean there exists a positive $C > 0$ such that $|f|\leq C |g|$ as functions. By $o(1)$ we mean a quantity tending to $0$ as $n\to\infty$. Note that $2e^{\frac{1}{4}} = 2.56805...$}
\begin{cor}\label{corollary}
$$\left|L^{\textrm{even}}(n) - L^{\textrm{odd}}(n)\right|\leq L(n)^{\frac{1}{2}}\cdot \left(2e^{\frac{1}{4}}\right)^{n^2(1+o(1))}.$$
\end{cor}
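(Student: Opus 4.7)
The plan is to recognize $|L^{\textrm{even}}(n) - L^{\textrm{odd}}(n)|^2$ as a single summand on the left-hand side of Theorem~\ref{identity}. Taking $k = n$, we have $|\alpha| = n^2$, so in particular we may consider the all-ones multiindex $\mathbf{1} \in \N^{[n]\times[n]}$ with $\mathbf{1}_{ij} = 1$ for all $i,j$; then $\mathbf{1}! = 1$, and by the interpretation recalled in the introduction, $c_{\mathbf{1}} = (-1)^{n(n-1)/2}(L^{\textrm{even}}(n) - L^{\textrm{odd}}(n))$. Since every summand $c_\alpha^2\alpha!$ in Theorem~\ref{identity} is nonnegative, dropping all terms but $\alpha = \mathbf{1}$ yields
\[
\bigl(L^{\textrm{even}}(n) - L^{\textrm{odd}}(n)\bigr)^2 \;\leq\; \frac{n!(n+1)!\cdots(2n-1)!}{0!1!\cdots(n-1)!}.
\]

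Next I would estimate the right-hand side via Stirling (equivalently, via the leading asymptotics of the Barnes $G$-function, since $G(N+1) = \prod_{k=0}^{N-1} k!$). A routine computation gives
\[
\log \frac{n!(n+1)!\cdots(2n-1)!}{0!1!\cdots(n-1)!} \;=\; n^2\log n + 2n^2\log 2 - \tfrac{3}{2} n^2 + O(n\log n),
\]
so that $|L^{\textrm{even}}(n) - L^{\textrm{odd}}(n)| \;\leq\; \bigl(2 n^{1/2} e^{-3/4}\bigr)^{n^2(1+o(1))}$.

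For the matching lower bound on $L(n)^{1/2}$ I would invoke the classical Egorychev--Falikman theorem (the resolved van der Waerden conjecture): when a Latin square is built row by row, the number of valid $(k{+}1)$-th rows is the permanent of a $0/1$ matrix with exactly $n-k$ ones in each row and each column, and by van der Waerden applied to its rescaling this is at least $(n-k)^n \cdot n!/n^n$. Multiplying over $k = 0, 1, \ldots, n-1$ gives
\[
L(n) \;\geq\; \frac{(n!)^{2n}}{n^{n^2}} \;=\; \bigl(n e^{-2}\bigr)^{n^2(1+o(1))},
\]
so $L(n)^{1/2} \geq \bigl(n^{1/2} e^{-1}\bigr)^{n^2(1+o(1))}$. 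Dividing the two estimates, the $n^{1/2}$ factors cancel and one gets
\[
\frac{|L^{\textrm{even}}(n) - L^{\textrm{odd}}(n)|}{L(n)^{1/2}} \;\leq\; \bigl(2 e^{-3/4} \cdot e\bigr)^{n^2(1+o(1))} \;=\; \bigl(2 e^{1/4}\bigr)^{n^2(1+o(1))},
\]
exactly the constant in Corollary~\ref{corollary}.

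I expect no real obstacle: all the content is concentrated in Theorem~\ref{identity}, and what remains is factorial-asymptotic bookkeeping plus a textbook permanent estimate. The only thing to be careful about is that the constants have to match exactly (the $e^{-3/4}$ from the right-hand side of the identity combines with the $e^{+1}$ from the van der Waerden lower bound to produce the claimed $e^{1/4}$, with no slack), which is really a consistency check on the Stirling computation.
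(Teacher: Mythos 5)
Your proposal is correct and takes essentially the same route as the paper: set $k=n$ in Theorem~\ref{identity}, use nonnegativity of the summands to drop every term except $\alpha=\mathbf{1}$ (for which $\alpha!=1$ and $c_{\mathbf 1}=\pm\bigl(L^{\textrm{even}}(n)-L^{\textrm{odd}}(n)\bigr)$), estimate $\log\frac{n!\cdots(2n-1)!}{0!\cdots(n-1)!}=n^2\log n+n^2(\log 4-\tfrac32)+o(n^2)$ by partial summation, and compare with $L(n)^{1/2}=n^{n^2/2}e^{-n^2(1+o(1))}$. The only difference is that you derive the needed lower bound $L(n)\geq (n!)^{2n}/n^{n^2}$ from Egorychev--Falikman rather than quoting van Lint--Wilson's asymptotic (whose lower-bound half is exactly that permanent argument), which is a perfectly adequate and slightly more self-contained substitute; just be careful that your intermediate restatements of the form $\bigl(2n^{1/2}e^{-3/4}\bigr)^{n^2(1+o(1))}$ are formally weaker than the $e^{o(n^2)}$-error estimates your computations actually give, and it is the latter that are needed for the final cancellation of the $n^{n^2/2}$ factors.
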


\begin{proof}[Proof of Corollary \ref{corollary}.]
This follows from partial summation and the theorem of van Lint-Wilson counting the total number of Latin squares, which we state here for convenience.

\begin{thm}[Theorem 17.3 in van Lint-Wilson \cite{vanlintwilson}.]
We have the following asymptotic for the number of Latin squares of order $n$:
$$L(n) = n^{n^2} e^{-2n^2(1 + o(1))}.$$
\end{thm}

Specifically, given this it suffices to show that $$c_{(1,\ldots,1)}^2\leq n^{n^2}\left(\frac{4}{e^{\frac{3}{2}}}\right)^{n^2(1+o(1))},$$ where $(1,\ldots,1)$ is the all-ones multiindex. Since the left-hand side of Theorem \ref{identity} with $k=n$ is certainly at least this large, it suffices to show that $$\log{\left(\frac{n!\cdots (2n-1)!}{0!\cdots (n-1)!}\right)} = \sum_{a=1}^n\sum_{b=a}^{a+n-1} \log{b} = n^2\log{n} - n^2\left(\frac{3}{2} - \log{4}\right) + o(n^2),$$ which follows via partial summation.
\end{proof}

In fact we can improve the exponential factor ever so slightly, obtaining:\footnote{$\frac{4}{\sqrt{e}} = 2.42612...$ Note that we recover the equivalence, noted in \cite{kumarlandsberg}, of the Alon-Tarsi conjecture and the nonvanishing of $$\int_{\SU(n)} U_{11}\cdots U_{nn} dU.$$}
\begin{cor}\label{better corollary}
If $|\alpha| = kn$, then $$c_\alpha \alpha! = \frac{k!\cdots (k+n-1)!}{0!\cdots (n-1)!}\int_{\SU(n)} U^\alpha dU.$$ It follows that
$$\left|L^{\textrm{even}}(n) - L^{\textrm{odd}}(n)\right|\leq L(n)^{\frac{1}{2}}\cdot \left(\frac{4}{\sqrt{e}}\right)^{n^2(1+o(1))}.$$
\end{cor}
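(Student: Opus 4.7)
The plan is to first establish the identity by recognizing $\int_{\SU(n)} U^\alpha \, dU$ as (up to a universal constant) the projection coefficient of $X^\alpha$ onto the line $\C \cdot \det(X)^k$ inside $\Sym^{kn}(\C^{n\times n})$, and then to combine the identity with a sharp pointwise bound on $\prod_{i,j} |U_{ij}|^2$ to extract the exponential rate.

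To prove the identity, first observe that the linear functional $\Phi : p \mapsto \int_{\SU(n)} p(U)\,dU$ on $\Sym^{kn}(\C^{n\times n})$ is invariant under the action $(A,B)\cdot p(X) := p(AXB)$ of $\SU(n) \times \SU(n)$ by translation-invariance of Haar. In the Cauchy-type decomposition $\Sym^{kn}(\C^n \otimes \C^n) = \bigoplus_{\lambda \vdash kn,\ \ell(\lambda) \leq n} S_\lambda(\C^n) \otimes S_\lambda(\C^n)$, only the rectangular $\lambda = (k^n)$ contributes a trivial $\SU(n)$-restriction, so the $\SU(n)\times\SU(n)$-invariant subspace is the $1$-dimensional line $\C\cdot\det(X)^k$; by Schur orthogonality, $\Phi$ annihilates every other isotypic component. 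To compute the $\det(X)^k$-projection of $X^\alpha$, I equip $\Sym^{kn}$ with the Fischer-Bombieri inner product $\langle P, Q \rangle := P(\partial) Q\big|_{X=0}$ (for which $\langle X^\alpha, X^\beta\rangle = \alpha!\,\delta_{\alpha\beta}$), which, being $\U(n^2)$-invariant, orthogonalizes the decomposition above. Combined with the normalization $\int_{\SU(n)} \det(U)^k\,dU = 1$ (since $\det \equiv 1$ on $\SU(n)$), this yields $\int_{\SU(n)} U^\alpha\,dU = c_\alpha\,\alpha! / \|\det(X)^k\|^2$. Finally, iterating Cayley's identity $\det(\partial)\det(X)^s = s(s+1)\cdots(s+n-1)\det(X)^{s-1}$ gives $\|\det(X)^k\|^2 = \det(\partial)^k \det(X)^k\big|_{X=0} = k!(k+1)!\cdots(k+n-1)!/(0!1!\cdots(n-1)!)$, which is exactly the stated factor.

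With the identity in hand, I specialize to $k = n$ and $\alpha$ the all-ones multi-index on $[n]\times[n]$ (so $\alpha! = 1$ and $U^\alpha = \prod_{i,j} U_{ij}$), giving $|L^{\textrm{even}}(n) - L^{\textrm{odd}}(n)| = |c_{(1,\ldots,1)}| = D_n \cdot \bigl|\int_{\SU(n)} \prod_{i,j} U_{ij}\,dU\bigr|$, where $D_n := n!(n+1)!\cdots(2n-1)!/(0!1!\cdots(n-1)!)$. Cauchy-Schwarz gives $\bigl|\int \prod_{ij} U_{ij}\,dU\bigr|^2 \leq \int \prod_{ij} |U_{ij}|^2\,dU$, and since $\sum_{i,j} |U_{ij}|^2 = n$ for any $U \in \SU(n)$, AM-GM yields the pointwise bound $\prod_{i,j} |U_{ij}|^2 \leq (n/n^2)^{n^2} = n^{-n^2}$, so $|c_{(1,\ldots,1)}|^2 \leq D_n^2/n^{n^2}$. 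Combining with the van Lint-Wilson asymptotic $L(n) = n^{n^2} e^{-2n^2(1+o(1))}$ and the Stirling-type estimate $D_n = n^{n^2}(4/e^{3/2})^{n^2(1+o(1))}$ (already carried out in the proof of Corollary \ref{corollary}), a direct calculation yields $|c_{(1,\ldots,1)}|^2 / L(n) \leq (16/e)^{n^2(1+o(1))}$, which is the square of the target bound.

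The main obstacle is the representation-theoretic step: identifying the $\SU(n) \times \SU(n)$-invariants in $\Sym^{kn}(\C^{n\times n})$ as the $1$-dimensional span of $\det(X)^k$ (so that $\Phi$ is essentially a projection onto it) and cleanly evaluating $\|\det(X)^k\|^2$ via Cayley's identity. Once these are in place, the passage to the improved bound is immediate: pointwise AM-GM, Cauchy-Schwarz, and routine Stirling estimates. The improvement over Corollary \ref{corollary} arises entirely from the fact that AM-GM is a pointwise estimate on the integrand, whereas the Theorem \ref{identity} argument bounds $c_{(1,\ldots,1)}^2$ by the whole sum $\sum_\alpha c_\alpha^2 \alpha!$ and thereby loses a factor of $n^{n^2}$ inside the exponential.
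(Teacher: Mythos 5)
Your proof is correct, and its first half takes a genuinely different route from the paper's. The paper derives the identity from the machinery already set up for Theorem \ref{identity}: it expands $\int_{\SU(n)}e^{\tr(XU)}\,dU$ as a power series in $\det(X)$ (using bi-invariance of Haar to reduce to diagonal $X$ and an $S^1$-scaling to kill all terms other than powers of the determinant), computes the coefficients as $\int_{\SU(n)}\tr(U)^{\ell n}\,dU$ via Schur--Weyl duality and the hook-length formula, and then reads off the identity by applying $\d^\alpha$ to both sides. You instead work degree by degree: the functional $p\mapsto\int_{\SU(n)}p(U)\,dU$ is $\SU(n)\times\SU(n)$-invariant, the Cauchy decomposition identifies the bi-invariants in degree $kn$ as exactly $\C\cdot\det(X)^k$, the Fischer inner product (which orthogonalizes that decomposition) realizes $\int_{\SU(n)}U^\alpha\,dU$ as the projection coefficient $c_\alpha\alpha!/\|\det(X)^k\|^2$, and Cayley's identity gives $\|\det(X)^k\|^2=\det(\d)^k\det(X)^k\vert_{X=0}=\prod_{s=1}^{k}s(s+1)\cdots(s+n-1)$, which indeed equals $k!\cdots(k+n-1)!/0!\cdots(n-1)!$. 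This trades the generating function, Schur--Weyl, and the hook-length formula for the Cauchy formula and Cayley's identity; it makes the structural point --- that the integral is literally the orthogonal projection onto the determinant line --- more transparent, and it recovers Theorem \ref{identity} as the special case $\sum_\alpha c_\alpha^2\alpha!=\|\det(X)^k\|^2$. The second half coincides with the paper's argument: the pointwise AM--GM bound $\prod_{i,j}|U_{ij}|^2\le (n/n^2)^{n^2}$ yields $|c_{(1,\ldots,1)}|\le D_n\,n^{-n^2/2}$ (your intermediate Cauchy--Schwarz step is harmless but unnecessary, since AM--GM can be applied to $|U^\alpha|$ directly inside the integral), and the Stirling estimates combining this with the van Lint--Wilson asymptotic are exactly those of Corollary \ref{corollary}.
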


In any case, having shown that Theorem \ref{identity} implies square-root cancellation of the sign over Latin squares, let us now prove Theorem \ref{identity}. The method we use will then allow us to improve the resulting bound slightly to get Corollary \ref{better corollary}. (In any case, these bounds are likely far from optimal, though there is not enough numerical data available to make a precise conjecture.)

\section{Proof of Theorem \ref{identity}}

\begin{proof}[Proof of Theorem \ref{identity}.]
Consider the expression $$\det(\d)^k\int_{\SU(n)} e^{\tr(XU)} dU\vert_{X=0},$$ where $dU$ denotes normalized Haar measure on $\SU(n)$, and $\det(\d)$ is the differential operator obtained by substituting $\d_{ij} := \frac{\d}{\d X_{ij}}$ into the expression for the determinant of a matrix. On the one hand, this is simply $$\int_{\SU(n)} \det(U)^k e^{\tr(XU)} dU\vert_{X=0} = 1.$$ On the other hand, as in \cite{creutz}, by invariance of the integral under changes of variable $X\mapsto UXV$ with $U,V\in \SU(n)$, we may first replace $X$ by $(X^\dagger X)^{\frac{1}{2}}$ via $$X = (X^\dagger X)^{\frac{1}{2}} [(X^\dagger X)^{-\frac{1}{2}} X]$$ (the term in brackets is unitary\footnote{Here we approach $X = 0$ through invertible matrices $X$.}), and then by the spectral theorem we reduce to the case of $X$ diagonal. But then by changing variables via $X\mapsto X\cdot \mathrm{diag}(1,\ldots,\zeta,\ldots,\zeta^{-1},\ldots 1)$ with $\zeta\in S^1$, it follows that terms not containing the eigenvalues of $X$ with the same multiplicity automatically vanish. Hence we see that $$\int_{\SU(n)} e^{\tr(XU)} dU = \sum_{\ell\geq 0} \frac{\det(X)^\ell}{(\ell n)!} \int_{\SU(n)} \tr(U)^{\ell n} dU.$$ The latter integrals are simply the multiplicity of the trivial representation in $V^{\otimes \ell n}$, where $V$ is the fundamental $n$-dimensional representation of $\SU(n)$. By Schur-Weyl duality, this multiplicity is simply the dimension of the irreducible representation of $S_{\ell n}$ corresponding to the partition $(\ell, \ldots, \ell)$ with precisely $n$ parts. By the hook-length formula this dimension is $$(\ell n)!\frac{0!\cdots (n-1)!}{\ell!\cdots (\ell + n - 1)!}.$$ Since $\det(\d)^k$ is homogeneous of degree $kn$, the only term that matters is the one involving $\det(X)^k$, and so we obtain:
$$1 = \frac{0!\cdots (n-1)!}{\ell!\cdots (\ell + n - 1)!} \det(\d)^k \det(X)^k.$$ But now the result follows simply by expanding out and noting that $\d^\alpha X^\beta = \delta_{\alpha,\beta} \alpha!$.
\end{proof}

\section{Proof of Corollary \ref{better corollary}}

\begin{proof}[Proof of Corollary \ref{better corollary}.]
Instead consider $$\d^\alpha \int_{\SU(n)} e^{\tr(XU)} dU\vert_{X=0}.$$ On the one hand, it is $$\int_{\SU(n)} U^\alpha dU,$$ and on the other hand it is, as noted above, $$c_\alpha \alpha!\cdot \frac{0!\cdots (n-1)!}{k!\cdots (k+n-1)!}.$$ Hence $$c_\alpha \alpha! = \frac{k!\cdots (k+n-1)!}{0!\cdots (n-1)!}\int_{\SU(n)} U^\alpha dU.$$ For the second part, taking $\alpha = (1,\ldots,1)$, by the arithmetic mean-geometric mean inequality, the absolute value of the right-hand side is bounded above by $$\frac{n!\cdots (2n-1)!}{0!\cdots (n-1)!}\int_{\SU(n)} \left(\frac{|U_{11}|^2 + \cdots + |U_{nn}|^2}{n^2}\right)^{\frac{n^2}{2}} dU = \frac{n!\cdots (2n-1)!}{0!\cdots (n-1)!}\cdot n^{-\frac{n^2}{2}}.$$ This completes the proof.
\end{proof}

Finally, let us also note that by combining Theorem \ref{identity} and Corollary \ref{better corollary} one gets a bound on $\int_{\SU(n)} U^\alpha dU$ for all such $\alpha$. Specifically, writing $|\alpha| =: kn$, one gets $$\left|\int_{\SU(n)} U^\alpha dU\right|\leq \left(\frac{0!\cdots (n-1)!}{k!\cdots (k+n-1)!}\cdot \alpha!\right)^{\frac{1}{2}}.$$ (Note that the integral is zero if either $|\alpha|$ is not a multiple of $n$ or if $\frac{n}{|\alpha|}\alpha$ is not doubly-stochastic.)

\section{Acknowledgements}

I would like to thank Qiaochu Yuan and Steve Huntsman for helpful comments on a MathOverflow question that led to this note, and Jacob Tsimerman for reading through an earlier version of this note and for helpful remarks.

\bibliography{squarerootcancellationforthesignsoflatinsquares}{}
\bibliographystyle{plain}

\ \\

\end{document}